\newcommand{\F}{\mathbb F}
\newcommand{\Q}{\mathbb Q}
\newcommand{\Z}{\mathbb Z}
\newcommand{\fp}{\mathfrak p}
\newcommand{\fP}{\mathfrak P}
\newcommand{\co}{\mathcal{O}}
\newcommand{\Gal}{\mathrm{Gal}}
\newcommand{\ord}{\mathrm{ord}}
\newcommand{\Cl}{\mathrm{Cl}}
\newcommand{\hilbert}[3]{\Bigl(\dfrac{#1,#2}{#3}\Bigr)}
\numberwithin{equation}{section}
\theoremstyle{plain}
\newtheorem{thm}{Theorem}
\newtheorem{cor}[thm]{Corollary}
\newtheorem{lem}[thm]{Lemma}
\theoremstyle{definition}
\newtheorem*{Milovic's conjecture}{Milovic's conjecture}
\newtheorem*{Equivalent Form of the Conjecture}{Equivalent Form of the Conjecture}
\newtheorem*{conj*}{Conjecture}
\begin{document}
\title{On the 2-adic logarithm of units of certain totally imaginary quartic fields}

\author{Jianing Li}
\address{CAS Wu Wen-Tsun Key Laboratory of Mathematics,   University of Science and Technology of China, Hefei, Anhui 230026, China}
\email{lijn@ustc.edu.cn}

\subjclass[2010]{11R29, 11R27, 11R29}

\keywords{$2$-adic logarithm, units, class groups, pure quartic fields}

\maketitle

\begin{abstract}
 In this paper,  we prove a result on the $2$-adic logarithm of the fundamental unit of the field $\Q(\sqrt[4]{-q}) $, where $q\equiv 3\bmod 4$ is a prime.  When $q\equiv 15\bmod 16$, this result confirms a speculation of Coates-Li and has consequences for certain Iwasawa modules arising in their work.
\end{abstract}

\section{Introduction}

Let $q$ be any prime $\equiv 3 \bmod 4$, and define
\[
	K = \Q(\sqrt{-q}), \, \, \,  F = K(\sqrt[4]{-q}).
\]
Then there is a unique prime $\fP$ of $F$ lying above $2$ which is ramified in the extension $F/\Q$ (see Lemma~\ref{lem: ram} below), and we write $\ord_\fP$ for the usual order valuation at $\fP$.  Moreover, $K$ has odd class number, and it is not difficult to show that $F$ also has odd class number (see Lemma~\ref{lem: oddness of clno} below).  The unit group of $F$ has rank 1, and we write $\eta$ for a fundamental unit of $F$. We have $\eta \equiv 1 \bmod \fP$ when $q > 3$, so that the usual logarithmic series $\log_\fP(\eta)$ will converge in the completion $F_\fP$ of $F$ at $\fP$ (see Lemma~\ref{lem: oddness of clno} below, where we also point out how to deal with the slightly exceptional case of $q =3$).
We shall use elementary arguments to prove the following result.

\begin{thm}\label{thm: main} Let $q$ be any prime $\equiv 3 \bmod 4$.  Let $\eta$ be a fundamental unit of $F$, and let $\fP$ be the unique ramified prime of $F$ above 2. Then (1) If $q \equiv  3 \bmod 8$, we have $\ord_\fP(\log_\fP(\eta)) = 0$; (2) If $q \equiv  7 \bmod 16$, we have $\ord_\fP(\log_\fP(\eta)) = 2$; and (3) If $q \equiv  15 \bmod 16$, we have 
	$\ord_\fP(\log_\fP(\eta)) \geq 4$. 
\end{thm}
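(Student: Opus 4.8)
The plan is to reduce the whole computation to a single coordinate of $\eta$, the one transverse to $K$, which is governed by the relative norm $N_{F/K}(\eta)=\eta\,\sigma(\eta)$, where $\sigma$ generates $\Gal(F/K)$ and $\sigma(\pi)=-\pi$ for $\pi=\sqrt[4]{-q}$. First I would identify the completion $F_\fP$ in each case using Lemma~\ref{lem: ram}. In all three cases $e(\fP/2)=2$: for $q\equiv 7\bmod 16$ one finds $F_\fP=\Q_2(\sqrt 3)$, for $q\equiv 15\bmod 16$ one finds $F_\fP=\Q_2(\sqrt{-1})$, and for $q\equiv 3\bmod 8$ one finds a ramified quadratic extension of the unramified quadratic field $K_\fp$. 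In each case $\sigma$ acts on $F_\fP$ as the nontrivial element of $\Gal(F_\fP/K_\fp)$, and $\pi$ is a unit at $\fP$ spanning the trace-zero line of $F_\fP/K_\fp$; in particular every element of that line has $\fP$-valuation in $2\Z$.

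Next, since $q>3$ gives $\co_K^\times=\{\pm1\}$, the norm $\eta\,\sigma(\eta)$ lies in $\{\pm1\}$, and a local computation of relative norms shows $\eta\,\sigma(\eta)=1$ in every case. Writing $\eta=x+y\pi$ with $x,y\in K$ turns this into the conic/Pell-type equation $x^2-\sqrt{-q}\,y^2=1$ over $\co_K$, and $\eta-\sigma(\eta)=2y\pi$. Applying $\log_\fP$ to $\eta\,\sigma(\eta)=1$ and using that $\log_\fP$ commutes with $\sigma$ gives $\sigma(\log_\fP\eta)=-\log_\fP\eta$, so $\log_\fP\eta$ lies on the trace-zero line; this already forces $\ord_\fP(\log_\fP\eta)$ to be even. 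To get the exact valuation I would expand $2\log_\fP\eta=\log_\fP\!\big(\eta/\sigma(\eta)\big)$, whose leading term is $\eta/\sigma(\eta)-1=(\eta-\sigma\eta)/\sigma(\eta)$, of $\fP$-valuation $2+2\,\ord_\fp(y)$ (with $\fp$ the prime of $K$ below $\fP$). A routine analysis of the $2$-adic logarithm on principal units, using $e=2$, then yields the uniform formula $\ord_\fP(\log_\fP\eta)=2\,\ord_\fp(y)$. Thus the theorem becomes the assertion that $\ord_\fp(y)$ equals $0$, equals $1$, and is $\ge 2$ in cases (1), (2), (3) respectively.

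The remaining, and main, task is to pin down $\ord_\fp(y)$ for the fundamental unit. Here I would exploit the factorization $(x-1)(x+1)=\sqrt{-q}\,y^2$ in $\co_K$: the two factors differ by $2$ and are coprime away from $\fp$ and the ramified prime $\fq=(\sqrt{-q})$, so the odd class number (Lemma~\ref{lem: oddness of clno}) lets one write each of $x\mp 1$ as a unit times a prime-to-$2$ square times a controlled power of $\fq$, thereby reducing $\ord_\fp(y)$ to the $2$-divisibility of $x\mp 1$, i.e. of $\Tr_{F/K}(\eta)$. Feeding this back into $x^2-\sqrt{-q}\,y^2=1$ and reducing modulo successive powers of $2$, the value of $\sqrt{-q}\bmod 8$ (equivalently $q\bmod 16$) dictates the outcome: for $q\equiv 7\bmod 16$ one obtains $\ord_\fp(y)=1$ exactly, while for $q\equiv 15\bmod 16$ the congruence obstructs every solution with $\ord_\fp(y)\le 1$, yielding only the lower bound $\ord_\fp(y)\ge 2$ (which is precisely why part (3) is an inequality).

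I expect this descent to be the crux. The delicate points are controlling the $2$-part of the two factors $x\mp 1$ — where the possible failure of $\{1,\pi\}$ to be a $2$-integral basis, and the possibility of $x$ being even, both intervene — and using minimality of the fundamental solution together with the oddness of the class number to guarantee that the obstruction for $q\equiv 15\bmod 16$ cannot be circumvented by replacing $\eta$ by another unit. The case $q\equiv 3\bmod 8$ is the easy one: it is the boundary value $\ord_\fp(y)=0$, and part (1) reduces to checking $\eta\not\equiv 1\bmod\fP^2$, which falls out of the same leading-term analysis.
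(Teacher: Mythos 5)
Your framework is fine up to and including the observation that $\sigma(\log_\fP\eta)=-\log_\fP\eta$ forces $\ord_\fP(\log_\fP\eta)$ to be even, but the step you call ``routine'' is precisely where the proposal collapses. The identity $\ord_\fP(\log_\fP w)=\ord_\fP(w-1)$ is valid only when $\ord_\fP(w-1)>e/(p-1)=2$ (this is \cite[Lemma 5.5]{Was97}, quoted in the paper). Your leading term is $\eta^2-1=\eta(\eta-\eta^{-1})$, whose valuation is $2+2\ord_\fp(y)$; when $\ord_\fp(y)=0$ this sits exactly at the boundary $2$, where the series terms $(\eta^2-1)$ and $(\eta^2-1)^2/2$ have equal valuation and can (and here do) cancel. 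So the ``uniform formula'' $\ord_\fP(\log_\fP\eta)=2\ord_\fp(y)$ is unjustified, and in fact false. Worse, the statements you propose to prove by descent are themselves false. In cases (2) and (3), where $2$ splits in $K$ and $K_\fp=\Q_2$, integrality of $\eta$ at $\fP$ gives $x,y\in\Z_2$ (the ring of integers of $F_\fP$ is $\Z_2[\sqrt[4]{-q}]$), and the norm equation reads $x^2-1=\sqrt{-q}\,y^2$; if $\ord_\fp(y)\ge 1$ then $x$ is a $2$-adic unit, so $x^2\equiv 1\bmod 8$, forcing $\ord_\fp(y)\ge 2$. Thus $\ord_\fp(y)=1$ is impossible for \emph{any} norm-one unit, so your case-(2) target cannot be reached; and since $2\ord_\fp(y)$ can never equal $2$, your formula is irreconcilable with part (2) of the theorem itself. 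In truth the fundamental unit has $\ord_\fp(y)=0$ in all three cases, and the case distinction is carried by the coordinate your reduction discards, namely $x=\frac{1}{2}\Tr_{F/K}(\eta)$.

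The repair needs two ideas absent from your sketch. First, to get above the convergence threshold one must work with $\eta^4$, not $\eta^2$: from $\eta^4-1=\eta^2(\eta+\eta^{-1})(\eta-\eta^{-1})$, with $\eta+\eta^{-1}=2x$ and $\eta-\eta^{-1}=2y\sqrt[4]{-q}$, one gets $\ord_\fP(\eta^4-1)=4+2\ord_\fp(x)+2\ord_\fp(y)>2$, whence $\ord_\fP(\log_\fP\eta)=\ord_\fP(\eta^4-1)-\ord_\fP(4)=2\ord_\fp(x)+2\ord_\fp(y)$; so the theorem amounts to showing $\ord_\fp(x)=0,\,1,\,\ge 2$ in cases (1), (2), (3), with $\ord_\fp(y)=0$. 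Second, to compute $\ord_\fp(x)$ one needs an arithmetic handle on $\eta$ itself, and minimality of the fundamental solution is not the usable input. The paper's device is that the class number $h$ of $F$ is odd, so $\fP^h=(\gamma)$ is principal, and $\gamma^2/N_{F/K}(\gamma)=\pm\eta^k$ with $k$ necessarily odd (otherwise $F=K(\sqrt{\pm 2})$); hence one may take $\eta=\gamma^2/N_{F/K}(\gamma)$. Writing $\gamma=a+b\sqrt[4]{-q}$ makes $x$ and $y$ explicit in $a,b$, and the congruence class modulo $4$ of the unit $u$ defined by $N_{F/K}(\gamma)=2^hu$ is pinned down by the requirement that $N_{F/K}(\gamma)$ be a local norm from $F_\fP$ --- a Hilbert-symbol computation in which $\sqrt{-q}\bmod 8$, i.e.\ $q\bmod 16$, enters and produces exactly the trichotomy of the theorem. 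Your factorization $(x-1)(x+1)=\sqrt{-q}\,y^2$ combined with odd class number might conceivably be developed into an alternative handle on $x$, but as written your descent is aimed at statements that are provably false.
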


We first remark that assertions (1) and (2) can be viewed as an exact $\fP$-adic form of the Brauer-Siegel theorem as $q$ varies. Secondly, our motivation for proving the above theorem came from a recent paper of J. Coates and Y. Li \cite{CL}, which uses 2-adic arguments from Iwasawa theory to prove various non-vanishing theorems for the values at $s=1$ of the complex $L$-series of certain elliptic curves with complex multiplication. In fact, the results in \cite{CL} are concerned with the field $F^* = \Q(\sqrt{-\sqrt{-q}})$, but we note that the fields $F$ and $F^*$ are isomorphic extensions of  $\Q$, and so Theorem \ref{thm: main} remains valid with $F^*$ replacing $F$. Assume first that $q \equiv 7 \bmod 8$, so that $2$ splits in $K$, and let $\fp$ be the unique prime of $K$ lying below $\fP$. By class field theory, there is a unique extension $K_\infty/K$ with Galois group $\Gal(K_\infty/K) \xrightarrow{\sim} \Z_2$,   which is unramified outside the prime 
$\fp$. Define $F_\infty^* = F^*K_\infty$, and let $\Gamma = \Gal(F_\infty^*/F)$. Let $M(F_\infty^*)$ (resp. $M(F^*)$)
denote the maximal abelian $2$-extension of $F_\infty^*$ (resp. $F^*$) which is unramified outside the primes of $F_\infty^*$ (resp. $F^*$) lying above $\fp$. Let $X(F_\infty^*) = \Gal(M(F_\infty^*)/F_\infty^*)$. Now $M(F_\infty^*)$ is clearly a Galois extension of $F^*$, and hence, as always in Iwasawa theory \cite{I1}, $\Gamma$ will act on $X(F_\infty^*)$ by lifting inner automorphisms. Writing $X(F_\infty^*)_\Gamma$ for the $\Gamma$-coinvariants of $X(F_\infty^*)$, we see immediately that $X(F_\infty^*)_\Gamma = \Gal(M(F^*)/F^*_\infty)$. Moreover
we have $X(F_\infty^*) = 0$ if and only if $X(F_\infty^*)_\Gamma = 0$.  By global class field theory, the Galois group $\Gal(M(F^*)/F_\infty^*)$ is a finite group, and a classical theorem of Coates and Wiles (see \cite[Theorem 8.2]{CL}) shows that 
\begin{equation}\label{eq: coates-wiles}
	[M(F^*):F_\infty^*] = 2^{(\ord_\fP(\log_\fP(\eta)) - 2)/2},
\end{equation}
where $\eta$ now denotes a fundamental unit of the field  $F^*$. Now when $q \equiv 7 \bmod 16$, Coates and Li show in \cite{CL} by a simple Iwasawa theoretic argument based on Nakayama's lemma that $X(F_\infty^*) = 0$, whence it follows from \eqref{eq: coates-wiles}
that $\ord_\fP(\log_\fP(\eta)) = 2$. Based on numerical computations carried out by Zhibin Liang, they also conjecture in \cite{CL} that $\ord_\fP(\log_\fP(\eta)) \geq 4$ when 
$q \equiv  15 \bmod 16$, but say that they cannot  prove this conjecture by the arguments of Iwasawa theory. Thus our theorem above confirms their conjecture, as well as giving a new and simple proof of their result when $q \equiv 7 \bmod 16$. In fact, when combined with the arguments from Iwasawa theory given in \cite{CL}, our result shows that $X(F_\infty^*)$
is a free finitely generated $\Z_2$-module of strictly positive rank when $q \equiv 15 \bmod 16$. Let $B$ be the abelian variety defined over $K$, which is the restriction of scalars
from the Hilbert class field of $K$ to $K$ of the elliptic curve $A$, with complex multiplication by the ring of integers of $K$, which was first defined by Gross (an equation for this elliptic curve is recalled in \cite{CL}, p. 1). Then in fact, when $q \equiv 15 \bmod 16$, our result shows that either $B(F_\infty^*)$ contains a point of infinite order, or the Tate-Shafarevich group of $B/F_\infty^*$ contains a copy of $\Q_2/\Z_2$. When $q \equiv 3 \bmod 8$, none of the above Iwasawa theoretic arguments remain literally valid, because $2$ now remains prime in $K$. Nevertheless, we cannot help speculating whether assertion (1) of Theorem \ref{thm: main} for $F^*$ could somehow be used to attack the non-vanishing Conjecture 1.8 of \cite{CL}. However, our theorem has the following consequence for primes $q\equiv 3\bmod 8$.
\begin{cor}\label{cor: iwa module}
	Suppose $q\equiv 3\bmod 8$. Let $F_\infty$ be the compositum of all $\Z_2$-extensions of $F$. Let  $M(F)$
	denote the maximal abelian $2$-extension of $F$ which is unramified outside $\fP$.  Then $M(F)=F_\infty$ and $\Gal(M(F)/F)\cong \Z^3_2$.
\end{cor}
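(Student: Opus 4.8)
The plan is to pass, via class field theory, to the local unit group at $\fP$, to reduce the isomorphism $\Gal(M(F)/F)\cong\Z_2^3$ to two facts about the $2$-adic logarithm and the roots of unity of $F_\fP$, and finally to extract $M(F)=F_\infty$ for free. First I would record the local picture: for $q\equiv3\bmod8$ the prime $2$ is inert in $K$ and ramifies in $F/K$, so by Lemma~\ref{lem: ram} $\fP$ is the \emph{unique} prime of $F$ above $2$ and $F_\fP/\Q_2$ has $e=f=2$, hence $[F_\fP:\Q_2]=4$. Since every $\Z_2$-extension of $F$ is abelian, pro-$2$, and unramified outside $2$, and $\fP$ is the only prime above $2$, this already gives $F_\infty\subseteq M(F)$. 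Writing $E=\co_F^\times$ and $U^1_\fP=1+\fP$ for the principal units of $F_\fP$, the inverse limit over ray class groups of conductor $\fP^n$, together with the oddness of the class number (Lemma~\ref{lem: oddness of clno}) and $\eta\equiv1\bmod\fP$, yields a canonical isomorphism $\Gal(M(F)/F)\cong U^1_\fP/\overline{E}$, where $\overline{E}$ is the closure of the image of $E=\langle-1\rangle\times\eta^{\Z}$ in $U^1_\fP$; the prime-to-$2$ Teichm\"uller part of $\co_\fP^\times$ and the odd class group contribute nothing to the pro-$2$ quotient.

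Next I would analyse $U^1_\fP/\overline{E}$ through $\log_\fP$. As a $\Z_2$-module $U^1_\fP\cong T\oplus\Z_2^4$, where $T=\mu_{2^\infty}(F_\fP)$ is the torsion subgroup and $4=[F_\fP:\Q_2]$; the logarithm induces an isomorphism of $U^1_\fP/T$ onto a full lattice $L=\log_\fP(U^1_\fP)\subset F_\fP$, and a term-by-term estimate shows $L\subseteq\co_\fP$. Because $\log_\fP(-1)=0$ we have $\log_\fP(\overline{E})=\Z_2\log_\fP(\eta)$, and assertion (1) of Theorem~\ref{thm: main} gives $\ord_\fP(\log_\fP\eta)=0$; in particular $\log_\fP\eta\neq0$, so $\overline{E}$ has $\Z_2$-rank $1$ and $\Gal(M(F)/F)$ has $\Z_2$-rank $4-1=3$. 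Moreover, since $\ord_\fP(2)=2$ and $L\subseteq\co_\fP$, every element of $2L$ has $\ord_\fP\geq2$, whereas $\ord_\fP(\log_\fP\eta)=0$; hence $\log_\fP\eta\notin2L$, so it is a primitive vector of the free module $L$ and $L/\Z_2\log_\fP\eta\cong\Z_2^3$ is torsion-free.

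From the exact sequence $0\to T/(T\cap\overline{E})\to U^1_\fP/\overline{E}\to L/\Z_2\log_\fP\eta\to0$, the only possible torsion is $T/(T\cap\overline{E})$. Since $\log_\fP\eta$ is a unit, any sequence $\pm\eta^{n_k}$ converging into $T=\ker\log_\fP$ forces $n_k\to0$ in $\Z_2$, so $\overline{E}\cap T=\{\pm1\}$; thus $\Gal(M(F)/F)$ is torsion-free exactly when $T=\{\pm1\}$, i.e. when $i\notin F_\fP$. This is the main obstacle. I would prove it from the equivalence $i\in F_\fP\iff -q\in(K_{\fp}^\times)^4$: writing $\alpha=\sqrt{-q}\in K_{\fp}$, a fourth root of $-q$ in $K_{\fp}$ can only be $\pm i\sqrt[4]{-q}$ (the option $\pm\sqrt[4]{-q}\in K_{\fp}$ would give $F_\fP=K_{\fp}$), so $-q\in(K_{\fp}^\times)^4$ is equivalent to $-\alpha\in(K_{\fp}^\times)^2$, which is equivalent to $K_{\fp}(i)=K_{\fp}(\sqrt\alpha)=F_\fP$. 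I would then rule this out by a direct computation of square classes in the unramified quadratic field $K_{\fp}=\Q_2(\sqrt5)$ using $q\equiv3\bmod8$; equivalently, one checks that $F_\fP/\Q_2$ is not the biquadratic field $\Q_2(\zeta_{12})$. Granting $i\notin F_\fP$, we get $T=\{\pm1\}\subseteq\overline{E}$ and therefore $\Gal(M(F)/F)\cong\Z_2^3$.

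Finally, since $\Gal(M(F)/F)\cong\Z_2^3$ is a product of copies of $\Z_2$, the extension $M(F)/F$ is itself a compositum of $\Z_2$-extensions, whence $M(F)\subseteq F_\infty$; combined with $F_\infty\subseteq M(F)$ from the first step this gives $M(F)=F_\infty$ and $\Gal(M(F)/F)\cong\Z_2^3$, as claimed. As a byproduct $\Gal(F_\infty/F)\cong\Z_2^3$, so Leopoldt's conjecture holds for $F$ at $2$; here this is precisely the non-vanishing $\log_\fP\eta\neq0$ furnished by the theorem. I expect the square-class computation establishing $i\notin F_\fP$ to be the only genuinely delicate point, the remaining steps being formal consequences of the reduction to $U^1_\fP/\overline{E}$ and of assertion (1).
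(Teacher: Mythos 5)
Your proposal is correct and follows essentially the same route as the paper's proof: the class field theory isomorphism $\Gal(M(F)/F)\cong (1+\fP\co_{F_\fP})/\overline{\langle \eta,-1\rangle}$ (valid since $F$ has odd class number), the $2$-adic logarithm with image inside $\co_{F_\fP}$, the input $\ord_\fP(\log_\fP(\eta))=0$ from Theorem~\ref{thm: main}(1) to force freeness of rank $3$, and the fact that $\sqrt{-1}\notin F_\fP$ to kill the torsion. The one step you defer to a ``direct computation'' --- that $-\sqrt{-q}$ is not a square in $K_\fp$ --- is likewise left as ``one can check'' in the paper, and it does hold: $\N_{K_\fp/\Q_2}(-\sqrt{-q})=q$ is not a square in $\Q_2$ because $q\equiv 3\bmod 8$, whereas norms of squares are squares.
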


We end this Introduction with two unrelated remarks. Firstly, the arguments used to prove Theorem \ref{thm: main} break down completely for primes $q \equiv 1 \bmod 4$, because then both 
$K$ and $F$ have even class numbers.  Secondly, the elementary arguments given in the next section hinge on the following simple observations. Firstly, we use repeatedly the identity
\[
	\eta^2 \pm 1 = \eta(\eta \pm \eta^{-1}).\]
Secondly, since the prime $\fP$ has ramification index 2, we have $\ord_\fP(\log_\fP(w)) = \ord_\fP(w-1)$ for any element of $w$ of $F$ with $\ord_\fP(w-1) >2$.

\section{Proofs}

In this section, we present our elementary proof for Theorem~\ref{thm: main}. Next we prove Corollary~\ref{cor: iwa module} by using a standard result of class field theory. Finally, we give another very simple proof for Theorem~\ref{thm: main}(3) by the Coates-Wiles formula \eqref{eq: coates-wiles}.

\begin{lem}\label{lem: ram}
There exists a unique ramified prime ideal $\fP$ of $F$ above $2$ which has ramification index $2$ in the extension $F/\Q$.
\end{lem}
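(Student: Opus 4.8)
The plan is to analyze the ramification of $2$ in the tower $\Q \subset K \subset F$ step by step. Recall $K = \Q(\sqrt{-q})$ with $q \equiv 3 \bmod 4$, and $F = K(\sqrt[4]{-q})$. Since $q \equiv 3 \bmod 4$, the discriminant of $K$ is $-q$, which is odd, so $2$ is unramified in $K/\Q$. Thus the behaviour of $2$ in $K$ depends on whether $-q$ is a square mod $8$: we have $2$ split in $K$ when $q \equiv 7 \bmod 8$ and $2$ inert in $K$ when $q \equiv 3 \bmod 8$. In either case, no prime of $K$ above $2$ is ramified over $\Q$.

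Next I would study the relative quadratic extension $F/K$, which is generated by a square root of $\alpha := \sqrt{-q} \in K$. The key point is that $F/K$ is ramified exactly at the primes dividing $2$ (together with the prime above $q$), and I would pin this down by a local computation at each prime $\fp$ of $K$ above $2$. Concretely, for such a $\fp$ one examines whether $\alpha$ is a square in the completion $K_\fp$, or more precisely whether $K_\fp(\sqrt{\alpha})/K_\fp$ is unramified or ramified; since $\alpha = \sqrt{-q}$ is a unit at $\fp$ (as $q$ is odd) but is not congruent to a square modulo the relevant power of $2$, the extension $K_\fp(\sqrt{\alpha})/K_\fp$ will be ramified of degree $2$. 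This gives a prime $\fP$ of $F$ above $\fp$ with ramification index $e(\fP/\fp) = 2$, and hence $e(\fP/2) = 2$ since $\fp/2$ is unramified.

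To get \emph{uniqueness}, I would count primes. In the case $q \equiv 3 \bmod 8$ there is a single prime $\fp$ of $K$ above $2$ (inert case), and the ramified quadratic extension $F/K$ has exactly one prime $\fP$ above it, so $\fP$ is the unique prime of $F$ above $2$ and it is ramified. In the case $q \equiv 7 \bmod 8$ there are two primes $\fp_1, \fp_2$ of $K$ above $2$; here the subtlety is that one must verify that $F/K$ is ramified at exactly one of them, so that exactly one prime $\fP$ of $F$ above $2$ is ramified while the other prime(s) are unramified. This asymmetry should follow because $\alpha$ and its conjugate $-\alpha = \bar\alpha$ are interchanged by $\Gal(K/\Q)$, which swaps $\fp_1$ and $\fp_2$; whether $\sqrt{\alpha}$ generates a ramified or unramified extension at $\fp_i$ is governed by the local square-class of $\alpha$, and these two local conditions are genuinely different.

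The main obstacle will be this last local analysis in the split case $q \equiv 7 \bmod 8$: one has to compute the square-class of $\sqrt{-q}$ in each of the two completions $K_{\fp_i} \cong \Q_2$ and check that it is a nonsquare unit at precisely one of them, yielding a single ramified prime $\fP$ of ramification index $2$. I expect this to reduce to a short Hensel/$2$-adic congruence argument, tracking $-q \bmod 16$ and using that the two square roots of $-q$ in $\Q_2$ land in different square classes. Once the ramified prime and its ramification index $2$ are identified in both cases, uniqueness is immediate from the prime-counting above, completing the proof of Lemma~\ref{lem: ram}.
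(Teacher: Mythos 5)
Your overall strategy---unramifiedness of $2$ in $K/\Q$, followed by a local square-class analysis of $\sqrt{-q}$ at the primes of $K$ above $2$---is sound, and in the split case $q\equiv 7\bmod 8$ it is essentially the paper's proof: the paper fixes the embedding $K\hookrightarrow \Q_2$ so that $\sqrt{-q}\equiv 3\bmod 8$ (when $q\equiv 7\bmod 16$) or $\sqrt{-q}\equiv 7\bmod 8$ (when $q\equiv 15\bmod 16$), whence $\fp$ ramifies in $F$ while $\bar{\fp}$ is inert, resp.\ split; this is exactly your ``track $-q\bmod 16$ and compare the square classes of the two roots'' plan. However, your final formulation of that check is wrong as stated: you propose to verify that $\sqrt{-q}$ ``is a nonsquare unit at precisely one'' of the two completions. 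Being a nonsquare unit is not the same as generating a ramified extension: the class of $5\bmod 8$ is a nonsquare, yet $\Q_2(\sqrt{5})/\Q_2$ is unramified, and indeed for $q\equiv 7\bmod 16$ the two roots of $-q$ are $\equiv 3$ and $5\bmod 8$, \emph{both} nonsquare units. The correct criterion (which you do state correctly earlier in your outline) is that exactly one root lies in a ramified square class, i.e.\ is $\equiv 3$ or $7\bmod 8$.

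The more substantive gap is in the inert case $q\equiv 3\bmod 8$, which you treat as the easy one. There $K_\fp$ is the unramified quadratic extension of $\Q_2$, whose residue field is $\F_4$; since every element of $\F_4$ is a square, your assertion that $\sqrt{-q}$ ``is not congruent to a square modulo the relevant power of $2$'' cannot be read off residually, and nothing in your outline justifies it. One genuinely needs an argument here, for instance: $K_\fp(\sqrt{u})/K_\fp$ is unramified or split if and only if $u\in (K_\fp^\times)^2(1+4\co_{K_\fp})$, and if $\sqrt{-q}$ lay in this group then taking norms to $\Q_2$ would force $q = N_{K_\fp/\Q_2}(\sqrt{-q})\equiv 1\bmod 4$, contradicting $q\equiv 3\bmod 4$. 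The paper sidesteps this computation entirely with a cheaper global observation: the Galois closure $F(\sqrt{-1})$ contains $\Q(\sqrt{-1})$, hence is ramified at $2$, hence $F/\Q$ is ramified at $2$; since $K/\Q$ is unramified at $2$ and $2\co_K$ is prime when $q\equiv 3\bmod 8$, that prime must be the one that ramifies in $F/K$, with ramification index $2$, and uniqueness is automatic. So the case you flagged as ``the main obstacle'' (split) is the textbook one, while the case you waved through (inert) is the one that needs an extra idea; your plan is completable, but only after supplying that missing argument and repairing the nonsquare-versus-ramified conflation.
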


\begin{proof}
A number field is ramified at a rational prime if and only if its Galois closure is ramified at that prime. It follows that $F/\Q$ is ramified at $2$ since its Galois closure $F(\sqrt{-1})$ is clearly ramified at $2$. If $q\equiv 3\bmod 8$, then $2$ is inert in $K$.  Hence $\fp=2\co_K$ must be ramified in $F/K$, with ramification index 2.
Assume next that $q\equiv 7\bmod 8$. Then $2$ splits in $K$, say $2\co_K=\fp\bar{\fp}$. The prime ideal $\fp$ induces an embedding from $K$ to $\Q_2$. We fix the choice of $\sqrt{-q}$ such that $\sqrt{-q}\equiv 3\bmod 8\Z_2$ when $q\equiv 7\bmod 16$ and that $\sqrt{-q}\equiv 7\bmod 8\Z_2$ when $q\equiv 15\bmod 16$. Then $\fp$ is ramified in $F$. Note that $\bar{\fp}$ is inert in $F$ when $q\equiv 7\bmod 16$ and that $\bar{\fp}$ splits in $F$ when $q\equiv 15\bmod 16$. This proves the lemma.
\end{proof}

\begin{lem}\label{lem: oddness of clno}
$(1)$Assume $q>3$.  Then the norm $N(\eta)$ of $\eta$  from $F$ to $K$ is $1$ and  $\eta$ is congruent to $1$ modulo $\fP$.
		
$(2)$ The class number  $h$ of $F$ is odd.
\end{lem}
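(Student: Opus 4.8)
The plan is to reduce both assertions to the single fact that $-1$ is not a norm from $F^\times$ down to $K^\times$; this is the one place where $q\equiv 3\bmod 4$ is used in an essential way.

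First I would pin down the unit group. Since $q>3$ we have $\co_K^\times=\{\pm1\}$, and as $\sqrt{-1}\notin F$ the field $F$ contains no roots of unity beyond $\pm1$; because $F$ has unit rank $1$ this gives $\co_F^\times=\{\pm1\}\times\langle\eta\rangle$. Let $\sigma$ generate $\Gal(F/K)$, so $\sigma(\sqrt[4]{-q})=-\sqrt[4]{-q}$. Since $\sigma(\eta)$ is again a fundamental unit, $\sigma(\eta)=\pm\eta^{\pm1}$; the choice $\sigma(\eta)=\pm\eta$ would put $\eta^2\in\co_K^\times$ and make $\eta$ a root of unity, so in fact $\sigma(\eta)=\pm\eta^{-1}$ and $N_{F/K}(\eta)=\eta\,\sigma(\eta)=\pm1$. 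Resolving this sign is the whole content of part~(1), and I expect it to be the main obstacle.

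To settle the sign I would show directly that $-1\notin N_{F/K}(F^\times)$. Let $\fq$ be the prime of $K$ above $q$; since $q$ ramifies in $K$ and $\sqrt{-q}$ has odd valuation at $\fq$, the prime $\fq$ ramifies in $F/K$, so there is a unique prime $\fQ$ of $F$ above it and $\sigma(\fQ)=\fQ$. If $-1=\beta\,\sigma(\beta)$ for some $\beta\in F^\times$, then $2\,v_\fQ(\beta)=v_\fQ(-1)=0$, so $\beta$ is a $\fQ$-unit; reducing modulo $\fQ$, and using that $\sigma$ acts trivially on the residue field $\co_F/\fQ\cong\F_q$ (the residue extension is trivial), we obtain $-1\equiv\bar\beta^{\,2}$ in $\F_q^\times$, i.e. $-1$ is a square mod $q$. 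This contradicts $q\equiv3\bmod4$, so no such $\beta$ exists and $-1\notin N_{F/K}(F^\times)$. In particular $N_{F/K}(\eta)\neq-1$, whence $N_{F/K}(\eta)=1$. For the congruence I would then reduce $\eta\,\sigma(\eta)=1$ modulo $\fP$: since $F_\fP/K_\fp$ is ramified its residue extension is trivial, $\sigma$ acts trivially on $\co_F/\fP\in\{\F_2,\F_4\}$, and we get $\bar\eta^{\,2}=1$, which forces $\bar\eta=1$.

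For part~(2) I would apply the ambiguous class number formula to the quadratic extension $F/K$. Exactly two finite primes ramify, namely $\fq$ and the prime above $2$ from Lemma~\ref{lem: ram}, while the unique (complex) archimedean place is unramified, so $\prod_v e_v=4$; together with $\co_K^\times\cap N_{F/K}(F^\times)=\{1\}$, which is immediate from $-1\notin N_{F/K}(F^\times)$, the formula gives $\#(\Cl_F)^{\Gal(F/K)}=h_K$, an odd number. Finally, writing $A$ for the $2$-Sylow subgroup of $\Cl_F$, oddness of $\#(\Cl_F)^{\Gal(F/K)}$ forces $A^{\Gal(F/K)}=0$; since an order-$2$ group acting on a nontrivial finite $2$-group always has nontrivial fixed points, we conclude $A=0$, i.e. $h$ is odd. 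The same computation covers $q=3$: there $\co_K^\times=\mu_6$, but $-1\notin N_{F/K}(F^\times)$ still holds by the argument above, and since $\#(\Cl_F)^{\Gal(F/K)}$ must be a positive integer the relevant unit index is again forced to equal $2$.
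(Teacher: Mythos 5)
Your proof is correct, and parts of it take a genuinely different (and more elementary) route than the paper. For the crucial fact that $-1\notin N_{F/K}(F^\times)$, the paper computes a quadratic Hilbert symbol in the local field $\Q_q(\sqrt{-q})$, reducing it to $\leg{-1}{q}=-1$; you instead reduce a hypothetical identity $-1=\beta\,\sigma(\beta)$ modulo the ramified prime $\fQ$ above $q$, using that $\sigma$ lies in inertia and hence acts trivially on $\co_F/\fQ\cong\F_q$, to conclude that $-1$ would be a square mod $q$. This exploits the same arithmetic fact globally rather than locally and needs no local class field theory; it only rules out \emph{global} norms, but that is all that is ever used (for $N(\eta)$ and for the unit index in the ambiguous class number formula). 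Your proof of $\eta\equiv 1\bmod\fP$ is also cleaner: the paper treats $q\equiv 7\bmod 8$ (residue field $\F_2$) and $q\equiv 3\bmod 8$ separately, in the latter case first establishing $\co_F=\co_K[\sqrt[4]{-q}]$ in order to compute $\eta+\eta^{-1}=2a\equiv 0\bmod \fP$; you get $\bar\eta^{\,2}=1$, hence $\bar\eta=1$ in $\F_2^\times$ or $\F_4^\times$, in one stroke from triviality of the inertia action at $\fP$, with no need for the ring of integers (which the paper needs later in Lemma~\ref{lem: main} anyway). Part (2) follows the paper's approach exactly: the ambiguous class number formula with $\prod_v e_v=4$ and unit index $2$; you replace the appeal to Nakayama's lemma by the equivalent fixed-point argument for $2$-groups, and you explicitly pin down the unit index when $q=3$ (where $\co_K^\times=\mu_6$), a case the paper's write-up glosses over. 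Two small blemishes, neither fatal: (i) your claim that the roots of unity of $F$ are $\{\pm1\}$ does not follow from $\sqrt{-1}\notin F$ alone --- one must also exclude $\mu_3$ (using $q>3$ and that $K$ is the unique quadratic subfield of the non-Galois field $F$) and $\mu_5$ (using that $\Q(\mu_5)/\Q$ is Galois while $F/\Q$ is not); but this claim is dispensable, since $N(\eta)=\pm 1$ follows directly from $N(\eta)\in\co_K^\times=\{\pm1\}$, and your congruence argument never uses it. (ii) The oddness of $h_K$, which you assert without comment, is by genus theory (only one finite prime ramifies in $K/\Q$), as the paper notes.
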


\begin{proof}
	Note that $N(\eta)$ is a unit of $K$ and hence $N(\eta)=\pm 1$.  Since $q\equiv 3\bmod 4$,  the quadratic Hilbert symbol in the local field $\Q_q(\sqrt{-q})$
	\[ \hilbert{-1}{\sqrt{-q}}{\Q_q(\sqrt{-q})}= \hilbert{-1}{q}{\Q_q}=-1.  \] It follows that $-1\notin N(F^\times)$.  In particular, $N(\eta)=1$.  
	
	If $q\equiv 7\bmod 8$, then $\co_F/\fP\cong \F_2$ by the above lemma. Hence $\eta\equiv 1\bmod \fP$ clearly. Suppose next that $q\equiv 3\bmod8$.  Note that the polynomial $(x+1)^2-\sqrt{-q}$ is Eisenstein in $K_\fp[x]$ where $K_\fp=\Q_2(\sqrt{3})$ is the completion of $K$ at $\fp=2\co_K$.  It follows that the ring of integers of $F$ is $\co_K[\sqrt[4]{-q}]$. Write $\eta = a+b\sqrt[4]{-q}$ with $a,b\in \co_K$. By (1), the conjugate of $\eta$ is $\eta^{-1}$ and hence $\eta+\eta^{-1}=2a\equiv 0\bmod \fP$. Thus $\eta\equiv 1\bmod \fP$ by the structure of the finite field $\co_F/\fP=\F_4$. This proves (1).

	 For (2), we first note that $K$ has odd class number by genus theory. The ambiguous class number formula \cite[Chapter 13, Lemma 4.1]{Lang90} states that for a cyclic extension $F/K$ of number fields, the order of the $\Gal(F/K)$-invariant subgroup of the ideal class group $\Cl_F$ of $F$ is given by:
	 \[ |\Cl^{\Gal(F/K)}_F|=|\Cl_K|\frac{\prod_{v}e_v}{[F:K][\co^\times_K:\co^\times_K\cap N(F^\times)]}.   \]
	 Here $\Cl_K$ is the ideal class group of $K$, the product runs over all the places of $K$ and $e_v$ is the ramification index of $v$ in $F/K$. In our case, the ramified places are $\sqrt{-q}\co_K$ and $\fp$. Recall that $\fp$ is the prime of $K$ lying below $\fP$. By (1), we know that $-1\notin N(F^\times)$. 
	 Applying the above formula gives $2\nmid |\Cl^{\Gal(F/K)}_F|$. Hence $2\nmid h=|\Cl_F|$ by Nakayama's lemma.  
\end{proof}
We remark that for $q=3$,  multiplying $\eta$ by a third root of unity if needed,  we can also assume that $\eta\equiv 1\bmod \fP$.

\begin{lem}\label{lem: main}

$(1)$  If $q\equiv 3\bmod 8$, then $\ord_\fP(\eta+\eta^{-1})=\ord_\fP(\eta-\eta^{-1})= 2$;
	
$(2)$  If $q\equiv 7\bmod 16$, then $\ord_\fP(\eta+\eta^{-1})= 4$. 

$(3)$  If $q\equiv 15\bmod 16$, then $\ord_\fP(\eta+\eta^{-1})\geq 6$.
\end{lem}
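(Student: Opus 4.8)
The plan is to convert the entire lemma into a single quadratic relation over $K$. Since $N_{F/K}(\eta)=1$ by Lemma~\ref{lem: oddness of clno}(1), the nontrivial $\sigma\in\Gal(F/K)$ sends $\eta$ to $\eta^{-1}$; writing $\theta=\sqrt[4]{-q}$ and $\eta=a+b\theta$ with $a,b\in K$ therefore gives
\[
\eta+\eta^{-1}=2a,\qquad \eta-\eta^{-1}=2b\theta,\qquad a^2-\sqrt{-q}\,b^2=1.
\]
Here $\theta$ and $\sqrt{-q}$ are units at $\fP$, while $\ord_\fP(2)=2$ and $\ord_\fP$ takes even values on $K$ (because $e(\fP/\fp)=2$ in every case, by Lemma~\ref{lem: ram}); one also checks from $\eta\in\co_F$ that $a,b$ are $\fP$-integral. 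This yields the uniform formulas $\ord_\fP(\eta+\eta^{-1})=2+\ord_\fP(a)$ and $\ord_\fP(\eta-\eta^{-1})=2+\ord_\fP(b)$, with $\ord_\fP(a),\ord_\fP(b)\in 2\Z_{\ge 0}$, so that the lemma is entirely about locating $\ord_\fP(a)$ and $\ord_\fP(b)$ inside the norm relation $a^2=1+\sqrt{-q}\,b^2$.

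Granting that $b$ is a unit at $\fP$, all three assertions drop out of one congruence. In case (1) the residue field is $\F_4$ and $\sqrt{-q}\equiv 1\bmod\fp$, so reducing $a^2=1+\sqrt{-q}\,b^2$ modulo $\fp$ gives $\bar a^{2}=1+\bar b^{2}=(1+\bar b)^2$; since $\eta\equiv1\bmod\fP$ forces $\bar a+\bar b=1$, a unit $\bar b\notin\{0,1\}$ makes $\bar a$ a unit as well, whence $\ord_\fP(\eta\pm\eta^{-1})=2$. In cases (2),(3) we have $a,b\in\Z_2$ with $b$ odd, so $b^2\equiv1\bmod 8$ and $a^2\equiv1+\sqrt{-q}\bmod 8$: for $q\equiv7\bmod16$ one has $\sqrt{-q}\equiv3\bmod8$, hence $a^2\equiv4\bmod8$, $\ord_2(a)=1$ and $\ord_\fP(\eta+\eta^{-1})=4$; for $q\equiv15\bmod16$ one has $\sqrt{-q}\equiv7\bmod8$, hence $a^2\equiv0\bmod8$, $\ord_2(a)\ge2$ and $\ord_\fP(\eta+\eta^{-1})\ge6$. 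Thus the whole content of the lemma is concentrated in the claim that $b$ is a unit — equivalently $\ord_\fP(\eta-\eta^{-1})=2$, equivalently $\eta\not\equiv1\bmod\fP^2$ (with the mild strengthening $\eta\not\equiv\theta\bmod\fP^2$ in case (1)).

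The hard part is exactly this claim, and it is where I expect the real obstacle to lie. It cannot come from the norm relation alone: over $\Z_2$ there do exist norm-one units with $b$ even (they have $a$ odd and are the ``square-shaped'' solutions, since squaring $a+b\theta$ produces an odd constant term and an even $\theta$-coefficient), so the fundamental unit must be singled out by a \emph{global} input, namely the oddness of $h=|\Cl_F|$ (Lemma~\ref{lem: oddness of clno}(2)), i.e. that $F$ has no everywhere-unramified quadratic extension. One clean instance of the mechanism already settles part of (1): if $\ord_\fP(\eta+\eta^{-1})\ge4$ then $\eta^2\equiv-1\bmod\fP^4$, so $-\eta^2\in U^{(4)}=1+\fP^4$, and since $\ord_\fP(2)=2$ this forces $\fP$ to be unramified in $F(\sqrt{-1})=F(\sqrt{-\eta^2})$; as $\fP$ is the only prime above $2$ in case (1), $F(\sqrt{-1})/F$ would then be everywhere unramified and nontrivial (as $\sqrt{-1}\notin F$), contradicting $2\nmid h$ and giving $\ord_\fP(\eta+\eta^{-1})=2$. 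The genuine difficulty is the complementary direction — $b$ odd in cases (2),(3), and $\bar b\neq0$ in case (1) — where the analogous extension $F(\sqrt{\eta^2})=F$ is trivial and the assumption $\eta\in U^{(2)}$ does not by itself reach the threshold filtration step $U^{(2e)}=U^{(4)}$ needed for unramifiedness. I expect to have to bring in the finer local square-class structure at $\fP$ together with the behavior of $\eta$ at the primes above the other prime $\bar\fp$ (controlled by $N_{F/K}(\eta)=1$ via Hilbert reciprocity) in order to force the correct parity of $b$; carrying out this local computation is the main obstacle.
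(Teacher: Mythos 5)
Your reduction is correct as far as it goes, but it does not prove the lemma: everything is made to rest on the claim that, writing $\eta=a+b\sqrt[4]{-q}$, the coefficient $b$ is a unit at $\fP$ (equivalently $\ord_\fP(\eta-\eta^{-1})=2$), and you never prove this claim --- you explicitly defer it as ``the main obstacle''. As you yourself observe, this is not a technical lemma but is essentially equivalent to the statement being proved: if $b$ were even in cases (2),(3), the norm equation forces $a$ odd and your own formulas would give $\ord_\fP(\eta+\eta^{-1})=2$ instead of $4$ or $\geq 6$; if $\bar b=0$ in case (1), then $\ord_\fP(\eta-\eta^{-1})\geq 4$. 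Moreover the relation $a^2-\sqrt{-q}\,b^2=1$ genuinely cannot decide the parity of $b$, since $-\sqrt{-q}\equiv 1\bmod 4$ makes both parities locally consistent, so some global input beyond $N_{F/K}(\eta)=1$ is unavoidable and is missing. What you do prove unconditionally is only one fragment: in case (1), the argument that $\ord_\fP(\eta+\eta^{-1})\geq 4$ would make $F(\sqrt{-1})=F(\sqrt{-\eta^2})$ an everywhere-unramified nontrivial quadratic extension, contradicting $2\nmid h$; that piece is correct and is a pleasant alternative to the paper's computation for that half of (1), but it does not extend to $\eta-\eta^{-1}$ (where, as you note, $F(\sqrt{\eta^2})=F$ is trivial), nor to cases (2),(3).

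The paper gets the needed global input without ever confronting your claim about $\eta$ directly: instead of expanding the unit $\eta$ in coordinates, it expands a generator $\gamma$ of $\fP^h$ (of $\fP$ itself when $q\equiv 3\bmod 8$), which is principal because $h=|\Cl_F|$ is odd. Then $\gamma^2/\pi=\pm\eta^k$ with $\pi=N_{F/K}(\gamma)$ and $k$ necessarily odd, and since $\eta\equiv 1\bmod\fP$ one may compute $\ord_\fP(\eta+\eta^{-1})$ using $\gamma^2/\pi$ in place of $\eta$. The point is that for $\gamma=a+b\sqrt[4]{-q}$ the analogue of your parity claim is automatic: $\ord_\fp(\pi)=h$ is odd, whereas any $c+d\sqrt[4]{-q}$ with $\ord_2(c)\neq\ord_2(d)$ has norm of even $2$-adic valuation, so $\ord_2(a)=\ord_2(b)=(h-1)/2$ is forced. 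The residual unit $u=\pi/2^h$ is then pinned down modulo $4$ by the Hilbert symbol $\hilbert{\pi}{\sqrt{-q}}{K_\fp}=1$ (valid because $\pi$ is a norm from $F_\fP$), and $\frac{\eta+\eta^{-1}}{2}=\frac{2a^2}{\pi}-1\equiv u^{-1}-1\bmod 4$ yields (2) and (3); case (1) runs the same way with $\gamma^2/2=\eta$ and a residue computation in $\F_4$. In effect, the paper derives your key claim as a byproduct rather than needing it as input. To salvage your route you would have to carry out the program you only sketch --- manufacturing an everywhere-unramified quadratic extension from $\eta$ when $b$ is even, including control at the primes above $\bar\fp$ --- which is a harder rearrangement of the same class-number input that the paper channels through $\gamma$.
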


\begin{proof}[Proof of Lemma~\ref{lem: main}]
The ideas of the proofs are the same  for all cases.  We first consider the case $q\equiv 3\bmod 8$ which is slightly easier to handle. 
If $q=3$, then $\eta=\frac{\sqrt{-3}+1}{2}-\sqrt[4]{-3}$, and it is readily verified that (1) holds. Assume now that $q>3$.
We have $\fp=2\co_K=\fP^2$. Then $\fP=\gamma\co_F$ for some $\gamma\in \co_F$ since the class number $h$ of $F$ is odd. It follows that $\frac{\gamma^2}{2}$ is a unit of $\co_F$. Thus $\frac{\gamma^2}{2}=\pm\eta^k$ for some integer $k$. We claim that $k$ is odd. Indeed, if $k$ is even, we would have that $(\gamma\eta^{-k/2})^2=\pm2$, whence $F=K(\sqrt{\pm 2})$, which is a contradiction. This proves the claim. By replacing $\gamma$ by $\gamma\eta^{-\frac{k-1}{2}}$, we may assume that $\frac{\gamma^2}{2}$ is the fundamental unit $\eta$.  In the proof of part (2) of Lemma~\ref{lem: oddness of clno}, we have shown that $\co_F=\co_K[\sqrt[4]{-q}]$. Thus we can write $\gamma= a+b\sqrt[4]{-q}$ with $a,b\in \co_K$, whence
	\[ \eta=\frac{a^2+b^2\sqrt{-q}}{2} +ab\sqrt[4]{-q} \quad \text{ and }\quad N(\gamma)=a^2-b^2\sqrt{-q}=\pm2.\] 
In fact, one can show that $N(\gamma)=-2$ by computing the Hilbert symbols of $-2$ and $\sqrt{-q}$, but we will not need this finer result.
 We need to calculate $a\bmod 2 \in \co_K/{2\co_K}\cong \F_4$. It is easy to see that $a\not\equiv 0\bmod 2\co_K$. We claim that $a\not\equiv 1\bmod 2\co_K$.  Note that $\sqrt{-q}\equiv 1\bmod 2\co_K$. It follows that $a^2\equiv b^2\bmod 2\co_K$.  
 Suppose $a\equiv 1\bmod 2\co_K$. Then $a^2\equiv b^2\equiv 1\bmod 4\co_K$. This contradicts to the equality $N(\gamma)=\pm2$ and this proves the claim. Since $a\not\equiv 1\bmod 2\co_K$, we have $a^2+1\not\equiv 0\bmod 2\co_K$ by the structure of the finite field $\F_4$. Since $N(\eta)=1$, the conjugate of $\eta$ is $\eta^{-1}$.  We then have $\ord_\fP(\eta+\eta^{-1})=\ord_\fP(a^2+b^2\sqrt{-q})=\ord_\fP(2(a^2+1))=2 $ and
$  \ord_\fP(\eta-\eta^{-1})=\ord_\fP(2ab\sqrt[4]{-q})=2.$
This completes the proof for $q\equiv 3\bmod 8$.

Now we assume $q\equiv 7\bmod 8$ in the rest of the proof. We have $\fP^h=\gamma \co_F$ for some $\gamma \in \co_F$. Put $\pi = N(\gamma)\in \co_K$. 
The equalities of ideals  $\fp^h\co_F = \fP^{2h} =  \pi\co_F = \gamma^2 \co_F$ gives a unit $\frac{\gamma^2}{\pi}$ of $F$. We have $\frac{\gamma^2}{\pi}= \pm \eta^k$ for some odd integer $k$, for the same reason as in the case $q\equiv 3\bmod8$. As $\eta \equiv 1\bmod \fP$, we have $\ord_\fP(\pm\eta^k\pm \eta^{-k})=\ord_\fP (\eta+\eta^{-1})$. We may assume that $\frac{\gamma^2}{\pi}$ is the fundamental unit $\eta$.  Write $\gamma = a +b \sqrt[4]{-q}$ with $a,b\in K$. Then 
\[\eta = \frac{a^2+\sqrt{-q}b^2}{\pi} + \frac{2ab\sqrt[4]{-q}}{\pi}
 \quad  \text{ and } \quad 
a^2-\sqrt{-q}b^2=\pi.\]
From now on, we work in $F_\fP$, which is a quadratic extension of $K_\fp=\Q_2$. Recall that as in the proof of Lemma~\ref{lem: ram}, the embedding induced by $\fp$ is chosen so that $\sqrt{-q}\equiv 3\bmod 8$ when $q\equiv 7\bmod 16$ and that $\sqrt{-q}\equiv 7\bmod 8$ when $q\equiv 15\bmod 16$. Note that the ring of integers of $F_\fP$ is $\Z_2[\sqrt[4]{-q}]$. Since $\gamma$ is integral in $F_\fP$, we have $a,b\in \Z_2$. 
Since $\ord_\fp(\pi)=h$, we 
can write $\pi=2^h u$ with $u\in \Z^\times_2$. Note that one must have $\ord_2(a)=\ord_2(b)$. Otherwise, the valuation of $\pi=N_{F_{\fP}/{K_\fp}}(a+b\sqrt[4]{-q})$ at $2$ is even which contradicts to the fact that $h$ is odd. Also note that if $c,d\in \Z^\times_2$, then $N_{F_\fP/{K_\fp}}(c+d\sqrt[4]{-q})\equiv 2\bmod 4\Z_2$. It follows that $\ord_2(a)=\ord_2(b)=(h-1)/2$.
Because $\pi=N_{F_\fP/{K_\fp}}(\gamma)$ is a norm,  we conclude the following values of the Hilbert symbols
\[ \hilbert{2^hu}{\sqrt{-q}}{K_\fp}=\hilbert{2u}{3}{\Q_2}=1  \text{ if } q\equiv 7\bmod 16 \] and 
\[ \hilbert{2^hu}{\sqrt{-q}}{K_\fp}=\hilbert{2u}{7}{\Q_2}=1   \text{ if } q\equiv 15\bmod 16.  \]
This implies that $ u\equiv 3\bmod 4 \text{ if } q\equiv 7\bmod 16$ and  that $u\equiv 1\bmod 4  \text{ if }  q\equiv 15\bmod 16.$
Thus \[ \frac{\eta+\eta^{-1}}{2}= \frac{a^2+\sqrt{-q}b^2}{\pi} =  \frac{2a^2-\pi}{\pi} =  (\frac{a}{2^{\frac{h-1}{2}}})^2 u^{-1}-1 \equiv u^{-1}-1    \equiv\begin{cases}  2 \bmod 4 & \text{ if } q\equiv 7\bmod 16, \\
0\bmod 4 & \text{ if } q\equiv 15\bmod 16.
\end{cases}\]
This finishes the proof of Lemma~\ref{lem: main} by the fact $\ord_\fP(2)=2$. 
\end{proof}

\begin{proof}[Proof of Theorem~\ref{thm: main}]	
As we mentioned in the end of the introduction, the basic fact that $\ord_\fP(\log_\fP(x))=\ord_\fP(x-1)$ if $\ord_\fP(x-1)>2$ will be used.  For a proof, see \cite[Lemma 5.5]{Was97}. 
Assume $q\equiv 3\bmod 8$. Then $\ord_\fP(\eta^2+1)=\ord_\fP(\eta^2+\eta\eta^{-1})=\ord_\fP(\eta+\eta^{-1})=2$ and $\ord_\fP(\eta^2-1)=\ord_\fP(\eta^2-\eta\eta^{-1})=\ord_\fP(\eta-\eta^{-1})=2$. Hence $\ord_\fP(\eta^4-1)=4$. This gives $\ord_\fP\log_\fP(\eta^4)=4$. Thus $\ord_\fP(\log_\fP(\eta))=\ord_\fP\log_\fP(\eta^4)-\ord_\fP(4)=0$. This proves (1).

Assume $q\equiv 7\bmod 16$. We have $\ord_\fP(\eta^2+1)=\ord_\fP(\eta^2+\eta\eta^{-1})=\ord_\fP(\eta+\eta^{-1})=4$. Then $\ord_\fP(\eta^2-1)=\ord_\fP(\eta^2+1-2)=\ord_\fP(2)=2$. This gives $\ord_\fP(\eta^4-1)=6$. Thus $\ord_\fP(\log_\fP(\eta^4)) =\ord_\fP(\eta^4-1)=6$.  Hence $\ord_\fP(\log_\fP(\eta))=6-\ord_\fP(4)=2$. This proves (2).

Assume $q\equiv 15\bmod 16$. Then $\ord_\fP(\eta^4-1)=\ord_\fP(\eta^2+1)+\ord_\fP(\eta^2-1)\geq 6+2=8$. Then $\ord_\fP(\log_\fP(\eta^4))=\ord_\fP(\eta^4-1)\geq 8$.  Thus  $\ord_\fP(\log_\fP(\eta))\geq 4$. This completes the proof of Theorem~\ref{thm: main}.\end{proof}

\medskip

Now, we prove Corollary~\ref{cor: iwa module}, and we begin by recalling a classical result from global class field theory. Let $L$ be any number field, and $p$ be a prime number. For a prime ideal $v$ of $L$, 
let $U_{1,v}$ denote the principal units in the completion $L_v$ of $L$, and put $U_1=\prod_{v\mid p}U_{1,v}$. Let $\phi$ be the canonical embedding $L \hookrightarrow \prod_{v\mid p}L_v$. Denote by $\mathcal{E}_1$ the group of global units of $L$ whose images lie in $U_1$, and let $\overline{\phi(\mathcal{E}_1)}$ denote the closure of $\phi(\mathcal{E}_1)$ in $U_1$ under the $p$-adic topology. Let $H$ be the $p$-Hilbert class field of $L$. Finally let $M(L)$ be  the maximal abelian $p$-extension of $L$, which is unramified outside the primes of $L$ lying above $p$. Then the Artin map induces an isomorphism 
\[ U_1/{\overline{\phi(\mathcal{E}_1)}} \cong \Gal(M(L)/H).\] 
This is a standard consequence of global  class field theory (see, for example,  \cite[Theorem 13.4]{Was97}). Note that $U_1$ is a finitely generated $\Z_p$-module of rank $[L:\Q]$. Moreover, the  $\Z_p$-module ${\overline{\phi(\mathcal{E}_1)}}$ has rank $\leq r_1+r_2-1$, and  Leopoldt's conjecture asserts that this rank is always equal to $r_1+r_2-1$;  here $r_1$ and $r_2$ are the number of real and complex places of $L$, respectively. 

\begin{proof}[Proof of Corollary~\ref{cor: iwa module}]
We apply the above isomorphism to the field $F$ with $q\equiv 3\bmod 8$ and the prime $2$.
In this case, $U_1=1+\fP\co_{F_\fP}$ has $\Z_2$-rank $[F:\Q]=4$, and $\overline{\phi(\mathcal{E}_1)}=\overline{\langle \eta, -1\rangle}$ clearly has $\Z_2$-rank $1$.  Moreover, the $2$-Hilbert class field of $F$ is $F$ itself since $F$ has odd class number by Lemma 4. Thus we obtain an isomorphism of $\Z_2$-modules
\[  (1+\fP\co_{F_\fP})/    \overline{\langle \eta, -1\rangle} \cong \Gal(M(F)/F).   \]
In order to prove $M(F)=F_\infty$, it suffices to show that there is no nontrivial torsion element in the group on the left. Consider the commutative diagram with exact rows
\[\begin{tikzcd}
0   \ar[r] & \{\pm 1\}   \ar[r] \ar[d]& \overline{\phi(\mathcal{E}_1)}  \ar[r, "\log_\fP"]\ar[d] & \Z_2\log_\fP(\eta) \ar[r] \ar[d] &  0\\
0  \ar[r]  &\mu(1+\fP\co_{F_\fP}) \ar[r] & 1+\fP\co_{F_\fP}   \ar[r, "\log_\fP"] & \log_\fP(1+\fP\co_{F_\fP}) \ar[r] & 0. 
\end{tikzcd} \]
Here $\mu(1+\fP\co_{F_\fP}) $ is the group of roots of unity in $1+\fP\co_{F_\fP}$ which equals $\{\pm 1\}$ as one can check that $\sqrt{-1}\notin F_\fP$. Thus the logarithm induces an isomorphism \[(1+\fP\co_{F_\fP})/    \overline{\langle \eta, -1\rangle} \cong \log_\fP(1+\fP\co_{F_\fP})/\Z_2\log_\fP(\eta).\]
Since $\ord_\fP(2)=2$, it is clear from the logarithmic series that
$\log_\fP(1+\fP\co_{F_\fP})\subset \co_{F_\fP}$.  We claim that the $\Z_2$-module $\log_\fP(1+\fP\co_{F_\fP})/\Z_2\log_\fP(\eta)$ is free. Suppose not. Then there exists an element $a$ in $\log_\fP(1+\fP\co_{F_\fP})\subset \co_{F_\fP}$ but not in $\Z_2\log_\fP(\eta)$ such that $2a\in \Z_2\log_\fP(\eta)$. Write $2a=r\log_\fP(\eta)$ with $r\in \Z_2$.  Note that $r$ must be in $\Z^\times_2$. This would give $\ord_\fP(\log_\fP(\eta))=\ord_\fP(2a)>0$ which contradicts to Theorem~\ref{thm: main}.
 Thus we have that $\Gal(M(F)/F)\cong\log_\fP(1+\fP\co_{F_\fP})/\Z_2\log_\fP(\eta) $ is a free $\Z_2$-module of rank $3$ and hence $M(F)=F_\infty$. This completes the proof. \end{proof}

We end this paper by noting a second and very simple proof of Theorem~\ref{thm: main}(3).
Suppose $q\equiv 7\bmod 8$, so that $2$ splits in $K$, and recall that  $\fp$ is the restriction of $\fP$ to $K$. As before, let $M(F)$ be the maximal abelian $2$-extension which is unramified outside $\fP$. By class field theory and the fact that $F$ has odd class number \cite[Theorem 11]{CW}, we have
\[ (1+\fP\co_{F_\fP})/\overline{\langle\eta,-1 \rangle} \cong \Gal(M(F)/F).   \]
Suppose now $q\equiv 15\bmod 16$. The embedding $K\hookrightarrow K_\fp=\Q_2$ induced by $\fp$ makes that $\sqrt{-q}\equiv -1\bmod 8$ whence $F_\fP=\Q_2(\sqrt{-1})$.  Clearly $\sqrt{-1}$ is in $1+\fP\co_{F_\fP}$ but not in $\overline{\langle\eta,-1 \rangle}$. Thus $\Gal(M(F)/F)$ has an element of order $2$. Now let $F_\infty = FK_\infty$, where $K_\infty$ is the unique $\Z_2$-extension of $K$ unramified outside $\fp$. Since $\Gal(F_\infty/F)$ is a free $\Z_2$-module of rank $1$, it follows that $\Gal(M(F)/F_\infty)$ must contain the element of order $2$, and so $\Gal(M(F)/F_\infty) \neq 0$.  By the formula \eqref{eq: coates-wiles} of Coates-Wiles, it follows that we must have $\ord_\fP(\log_\fP(\eta))\geq 4$, as required.

\section*{Acknowledgments}
The author thanks Professor John Coates and Yongxiong Li for their carefully reading of this paper and  helpful suggestions.
The author is supported by the Fundamental
Research Funds for the Central Universities (No.~WK0010000058), NSFC (Grant No.~11571328) and by the Anhui Initiative in Quantum Information Technologies
(Grant No.~AHY150200).

\end{document}